\documentclass[12pt,a4paper]{article}

\usepackage[utf8]{inputenc}
\usepackage[T1]{fontenc}
\usepackage{lmodern}
\usepackage[english]{babel}
\usepackage{amsmath,amssymb,amsthm,mathtools}
\usepackage{geometry}
\usepackage{enumitem}
\usepackage{graphicx}
\usepackage{tikz}
\usepackage{float}
\usetikzlibrary{calc}
\usepackage[hidelinks]{hyperref}
\theoremstyle{thmstyleone}
\newtheorem{theorem}{Theorem}[section]
\newtheorem{proposition}[theorem]{Proposition}
\newtheorem{lemma}[theorem]{Lemma}
\newtheorem{corollary}[theorem]{Corollary}
\newtheorem{conjecture}[theorem]{Conjecture}

\theoremstyle{thmstylethree}

\theoremstyle{thmstyletwo}
\newtheorem{remark}[theorem]{Remark}

\newcommand{\HH}{\mathbb H}
\newcommand{\SSph}{\mathbb S}
\newcommand{\RR}{\mathbb R}
\newcommand{\Vol}{\operatorname{Vol}}

\newcommand{\st}{\operatorname{st}}
\newcommand{\lk}{\operatorname{lk}}

\AtBeginDocument{\raggedbottom}

\begin{document}	

\title{Minimal-Volume Equiangular Hyperbolic $4$-Polytopes}

\author{Andrey Egorov\\
	Sobolev Institute of Mathematics, Novosibirsk, Russia\\
	\texttt{a.egorov2@g.nsu.ru}}
\date{}
\maketitle

\begin{abstract}
We study finite-volume convex hyperbolic $4$-polytopes whose
dihedral angles are all equal to a fixed strictly acute angle $\alpha$.
Put
$\alpha_0=\arccos(1/3)$ and $\alpha_1=\arccos(1/4)$.  We first show that the
class is empty for $\alpha<\alpha_0$.  For
$\alpha_0\leq\alpha<\alpha_1$, the unique polytope of minimum volume is the
regular hyperbolic $4$-simplex with dihedral angle $\alpha$.  As
$\alpha$ increases to $\alpha_1$, this simplex degenerates to a Euclidean
one and no hyperbolic simplex exists at $\alpha_1$.  For
$\alpha_1\leq\alpha<\pi/2$, the unique minimum is the regular equiangular
hyperbolic $4$-cube.  The proof combines the hyperbolic Gram--Euler relation
with the Davis--Okun theorem on the Charney--Davis inequality for flag
triangulations of the $3$-sphere.  The geometric step is a missing-face
argument showing that the boundary of the dual of every nonsimplex in the
class is flag.
\end{abstract}

\noindent\textbf{Keywords.} Hyperbolic polytope, equiangular polytope, minimal volume, Gram--Euler relation, flag sphere, Charney--Davis inequality

\medskip
\noindent\textbf{2020 Mathematics Subject Classification.} 51M10, 52B11, 52B15

\maketitle

\section{Introduction}

The problem of minimizing volume under a prescribed angle condition is one
of the basic extremal questions for hyperbolic polyhedra and polytopes.  We
use \emph{equiangular} in its literal sense: all dihedral angles have one
common value.  The class studied in this paper is further restricted by the
inequality $\alpha<\pi/2$, and will be called the \emph{acute
equiangular class}.  Thus neither the right-angled case $\alpha=\pi/2$ nor
any obtuse angle is included. 

In dimension three, the non-obtuse equal-angle condition is governed by
Andreev's theorem, but volume minimization still requires substantial
geometric input. Three dimensional equiangular polyhedra may exist only for
$\frac{\pi}{3}\le \alpha\le \frac{\pi}{2}$.  At the left endpoint $\alpha=\pi/3$, Atkinson's estimate
implies that the regular ideal tetrahedron is the unique minimum among
$\pi/3$-equiangular hyperbolic polyhedra \cite{Atkinson2009}.  Further
arithmetic and volume information in the $\pi/3$-equiangular class was
obtained by Nonaka and Yoshida \cite{NonakaYoshida2026}.  At the excluded
right-angled endpoint $\alpha=\pi/2$, the minimum among finite-volume
right-angled hyperbolic polyhedra is the right-angled triangular bipyramid
$P(3,2)$ \cite{VesninEgorov2025}.

The intermediate tetrahedral range in dimension three was treated in
\cite{Egorov2026}.  Namely, for
\[
 \frac{\pi}{3}\leq\alpha<\arccos\frac13,
\]
the regular tetrahedron with dihedral angle $\alpha$ is the unique
minimum-volume equiangular hyperbolic polyhedron.  After the regular
tetrahedron disappears, the natural candidate is the equiangular
parallelepiped; this is posed as an open problem in \cite{Egorov2026}.

The acute four-dimensional problem has a different feature.  In even dimension,
the Gram--Euler relation expresses hyperbolic volume as an alternating sum of
face angles.  For an equiangular simple $4$-polytope, all local angles are
determined by the common dihedral angle.  The volume therefore becomes a
linear function of only the numbers of vertices and facets.  The required
sharp relation between these two numbers is supplied by the
Charney--Davis inequality for flag triangulations of $\SSph^3$, formulated
in \cite{CharneyDavis1993} and proved in this dimension by Davis and Okun
\cite{DavisOkun2001}.

There are two geometrically distinguished angles.  We put
\begin{equation}\label{eq:critical-angles}
 \alpha_0=\arccos\frac13=1.230959\ldots,
 \qquad
 \alpha_1=\arccos\frac14=1.318116\ldots.
\end{equation}
The first is the dihedral angle of a Euclidean regular tetrahedron.  It is
the ideal-vertex threshold for equiangular hyperbolic $4$-polytopes.  The
second is the dihedral angle of a Euclidean regular $4$-simplex, and is the
angle at which the hyperbolic regular $4$-simplex degenerates.  The regular
hyperbolic $4$-cube exists throughout
$[\alpha_0,\pi/2)$: it is ideal at $\alpha_0$, compact for
$\alpha>\alpha_0$, and shrinks to a point as $\alpha\to\pi/2$.

Our main result is the following.

\begin{theorem}[Main theorem]\label{thm:main}
Let $P\subset\HH^4$ be a finite-volume convex hyperbolic $4$-polytope whose
dihedral angles are all equal to an acute angle
$\alpha\in(0,\pi/2)$.
\begin{enumerate}
\item If
\[
 \alpha_0\leq\alpha<\alpha_1,
\]
then
\[
 \Vol(P)\geq\Vol(\Delta^4_\alpha),
\]
where $\Delta^4_\alpha$ is the regular hyperbolic $4$-simplex with dihedral
angle $\alpha$.  Equality holds only if $P$ is isometric to
$\Delta^4_\alpha$.
\item If
\[
 \alpha_1\leq\alpha<\frac{\pi}{2},
\]
then
\[
 \Vol(P)\geq\Vol(C^4_\alpha),
\]
where $C^4_\alpha$ is the regular equiangular hyperbolic $4$-cube.  Equality
holds only if $P$ is isometric to $C^4_\alpha$.
\end{enumerate}
\end{theorem}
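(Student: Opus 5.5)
The plan is to reduce the volume to a linear function of the face numbers, then bound those numbers combinatorially. First I will pin down the combinatorics of an acute equiangular $P$. By Vinberg's theory of acute-angled polytopes, every proper vertex is simple: its link is a spherical simplex with all dihedral angles $\alpha$. An ideal vertex has as link a compact Euclidean acute-angled $3$-polytope, which must be a simplex; all its dihedral angles equal $\alpha$, so it is the regular Euclidean tetrahedron and $\alpha=\alpha_0$. Hence $P$ is simple, and ideal vertices occur only at $\alpha=\alpha_0$. Running the same Gram-matrix test at a vertex (four facets, Gram matrix with $1$ on the diagonal and $-\cos\alpha$ off it, which is positive definite iff $\cos\alpha<1/3$) shows that no vertex exists when $\alpha<\alpha_0$. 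For a simple $4$-polytope we have $2f_1=4f_0$, and Euler's relation $f_0-f_1+f_2-f_3=0$ gives $f_2=f_0+f_3$, so the face numbers are controlled by $f_0$ and $f_3$.

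Next I apply the hyperbolic Gram--Euler relation in dimension $4$. It expresses $\Vol(P)$, up to the constant $4\pi^2/3$, as an alternating sum over faces $F$ of the normalized interior angles $\beta(F)$. Equiangularity fixes every $\beta(F)$ as a function of $\alpha$:
\begin{itemize}
\item $\beta=1/2$ for facets and $\alpha/2\pi$ for ridges;
\item $(3\alpha-\pi)/4\pi$ for edges, this being the area fraction of the equilateral spherical triangle with angles $\alpha$;
\item $\omega(\alpha)$ for vertices, the volume fraction of the regular spherical tetrahedron in $\SSph^3$ with dihedral angle $\alpha$, with $\omega(\alpha_0)=0$ at ideal vertices.
\end{itemize}
Substituting the $f$-vector relations gives $\Vol(P)=A(\alpha)f_0+B(\alpha)f_3$ with explicit coefficients. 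I will check the signs: I expect $A(\alpha)>0$ and $B(\alpha)<0$ on the relevant range, so that adding facets relative to vertices is what lowers volume, and the problem becomes minimizing a linear form over realizable pairs $(f_0,f_3)$.

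The combinatorial input is the dual $P^*$, whose boundary is a simplicial $3$-sphere with $f_0(\partial P^*)=f_3(P)$ and $f_3(\partial P^*)=f_0(P)$. The key geometric step, and the one I expect to be hardest, is the missing-face argument: if $P$ is not a simplex, then $\partial P^*$ is flag. Take $k$ facets that pairwise meet but have no common face.
\begin{itemize}
\item For $k=3$ the Gram submatrix is positive definite because $\alpha>\pi/3$. By Vinberg's criterion for acute polytopes, the three facets then meet in a face, a contradiction.
\item For $k=4$ the same argument works when $\cos\alpha<1/3$. At $\alpha=\alpha_0$ the matrix is parabolic, and I must separately show that the four facets meet at an ideal vertex rather than forming a missing face. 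This boundary case needs care.
\item For $k=5$, five pairwise adjacent facets in $\HH^4$ with all angles $\alpha$ either form an indefinite Gram matrix, in which case they bound a simplex and $P=\Delta^4_\alpha$ when $\alpha<\alpha_1$, or they are impossible when $\alpha\geq\alpha_1$, by a signature count.
\end{itemize}

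For flag $\partial P^*$ I then invoke Davis--Okun: the Charney--Davis inequality for flag $3$-spheres reads $f_1\le 5f_0-16$ in the dual, with equality for the cross-polytope. Using $f_3=f_1-f_0$ for simplicial $3$-spheres, this translates to $f_0(P)\le 4f_3(P)-16$. A flag $3$-sphere also has at least $8$ vertices, so $f_3(P)\ge8$. Since $A>0>B$ in my expected signs, I rewrite
\[
\Vol(P)=A(\alpha)\,f_0+B(\alpha)\,f_3
\]
and minimize it over the region cut out by $f_0\le 4f_3-16$ and $f_3\ge 8$. I expect the minimum to be attained at $(f_0,f_3)=(16,8)$, the unique flag $3$-sphere case being the cross-polytope, and equality in Charney--Davis should force this combinatorics. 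Rigidity of the equiangular realization (Andreev/Vinberg uniqueness from the Gram matrix) then gives $P\cong C^4_\alpha$. Finally, for $\alpha<\alpha_1$ I compare the cube value with the simplex value $A\cdot5+B\cdot5$, showing that the difference has a fixed sign on $[\alpha_0,\alpha_1)$, so the simplex wins there. For $\alpha\ge\alpha_1$ no simplex exists, so the cube is the minimum.
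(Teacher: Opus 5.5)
Your outline matches the paper's: simplicity and $\alpha\ge\alpha_0$ from vertex links, Gram--Euler reducing volume to a linear form in $(f_0,f_3)$, the Gram-matrix missing-face argument, Davis--Okun, and rigidity. However, the step that actually produces the lower bound fails as written. For a flag $3$-sphere $\Delta$ the Charney--Davis inequality is $\gamma_2=f_1-5f_0+16\ge0$, i.e.\ $f_1(\Delta)\ge 5f_0(\Delta)-16$, not $\le$. The join of two pentagons, with $f_0=10$ and $f_1=35$, violates your version. Dually the correct inequality is $f_0(P)\ge 4f_3(P)-16$. With your reversed constraint $f_0\le 4f_3-16$ and your expected signs $A>0>B$, the form $Af_0+Bf_3$ decreases as $f_0$ decreases and $f_3$ grows. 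So the region you describe is unbounded below for this objective, and $(16,8)$ is not its minimizer.

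Even with the direction fixed, the signs $A>0>B$ are not enough. Using $A>0$ and $f_0\ge 4f_3-16$ gives
\[
 (Af_0+Bf_3)-(16A+8B)\ \ge\ (4A+B)(f_3-8),
\]
so you need $4A+B>0$. In the paper's notation this is $4a(\alpha)-b(\alpha)>0$, with $a=\omega+\frac12-\frac{\alpha}{\pi}$ and $b=\frac12-\frac{\alpha}{2\pi}$. This is the key analytic input, and it is sharp: it degenerates to $0$ at $\alpha=\pi/2$, where $\omega=1/16$. The paper proves it with the Schl\"afli formula: $(4a-b)'=6\beta/\pi^2-7/(2\pi)<0$, because the link edge length satisfies $\beta<\pi/2$.

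Your uniqueness mechanism also needs correcting. Equality in Charney--Davis does not force the cross-polytope, since every suspension of a flag $2$-sphere has $\gamma_2=0$. Uniqueness instead comes from strictness of $(4A+B)(f_3-8)>0$ when $f_3>8$, followed by the eight-vertex lemma and Andreev rigidity.

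Finally, your simplex-versus-cube comparison on $[\alpha_0,\alpha_1)$ is only asserted. The paper shows $11a-3b>0$ on $[\alpha_0,\alpha_1]$ in two steps. First, $11a-3b$ is decreasing there, using $\beta\le\pi/3$. Second, at $\alpha_1$ it equals $v(C^4_{\alpha_1})>0$, because the simplex term has degenerated to zero.
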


The right-angled endpoint is deliberately excluded.  The family
$C^4_\alpha$ degenerates as $\alpha\to\pi/2$, whereas nondegenerate
right-angled hyperbolic $4$-polytopes form a different limiting problem.  For
example, among ideal right-angled hyperbolic $4$-polytopes, the ideal
$24$-cell is optimal both for volume and for the number of facets
\cite{Kolpakov2012}.

The proof has two independent parts.  First, the Gram matrix of a collection
of pairwise intersecting facets shows that the dual boundary of every
non-simplex is a flag simplicial $3$-sphere.  Second, the Gram--Euler relation
and the Davis--Okun inequality give a sharp comparison with the $4$-cube.
In the range where the simplex exists, a one-variable spherical
Schl\"afli calculation shows that the cube has strictly larger volume than
the simplex.

The paper is organized as follows.  Section~\ref{sec:geometry} describes the
angle range and the two model polytopes.  Section~\ref{sec:flag} proves the
flag dichotomy and records the required face-number inequalities.
Section~\ref{sec:volume} reduces volume to a linear expression and establishes
the analytic inequalities for its coefficients.  Section~\ref{sec:proof}
proves Theorem~\ref{thm:main}.  The final section formulates the natural
even-dimensional extension and explains why the present proof is special to
dimension four.

\section{Equiangular polytopes and the two models}\label{sec:geometry}

\subsection{Gram matrices}

We use the hyperboloid model in Minkowski space $\RR^{4,1}$ with scalar
product
\[
 \langle x,y\rangle=-x_0y_0+x_1y_1+\cdots+x_4y_4.
\]
Thus
\[
 \HH^4=\{x\in\RR^{4,1}:\langle x,x\rangle=-1,\ x_0>0\}.
\]
A supporting hyperplane of a convex polytope has the form
$u^\perp\cap\HH^4$, where $u$ is an outward unit spacelike normal.  If two
facets meet at interior dihedral angle $\theta$, then
\begin{equation}\label{eq:dihedral-Gram}
 \langle u_i,u_j\rangle=-\cos\theta.
\end{equation}
For disjoint supporting hyperplanes, the corresponding scalar product is at
most $-1$ after the outward normals are chosen.

We shall use two standard facts about polytopes with non-obtuse dihedral
angles.  First, let $G_I$ be the principal Gram submatrix belonging to a
collection of facets of such a hyperbolic polytope.  If $G_I$ is positive
definite, then these facets have a common face of codimension $|I|$.  If
$|I|=n$ and $G_I$ is indecomposable, positive semidefinite of rank $n-1$,
with a positive vector spanning its kernel, then the facets have a common
ideal vertex.  These are the elliptic and parabolic face criteria from
Andreev's intersection theorem \cite{Andreev1970Intersection}; see also
\cite[Part I, Chapter 6, Sections 2--3]{Vinberg1993}.  In particular, every
ordinary vertex of a non-obtuse hyperbolic $n$-polytope belongs to exactly
$n$ facets.  Second, every compact Euclidean polytope whose dihedral angles
are all strictly smaller than $\pi/2$ is a simplex \cite{Coxeter1934}.

\begin{proposition}\label{prop:simple-angle-range}
Let $P\subset\HH^4$ be a finite-volume acute equiangular polytope
with common dihedral angle $\alpha$.  Then $P$ is simple and
\[
 \alpha\geq\alpha_0=\arccos\frac13.
\]
If $\alpha=\alpha_0$, every vertex is ideal.  If
$\alpha>\alpha_0$, every vertex is finite and $P$ is compact.
\end{proposition}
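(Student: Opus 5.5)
We work vertex by vertex, using the two facts recorded above: the elliptic/parabolic face criteria and Coxeter's theorem that acute compact Euclidean polytopes are simplices.

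\emph{Finite vertices.} Let $v$ be a finite vertex of $P$. Since $\alpha<\pi/2$, $P$ is non-obtuse, so $v$ lies in exactly $4$ facets. Its link is then a spherical tetrahedron all of whose dihedral angles equal $\alpha$. The Gram matrix of the four normals at $v$ is
\[
 G=(1+\cos\alpha)I-\cos\alpha\,J,
\]
where $J$ is the all-ones $4\times4$ matrix. Its eigenvalues are $1+\cos\alpha$ (with multiplicity $3$) and $1-3\cos\alpha$. A spherical tetrahedron requires $G$ positive definite, so $\cos\alpha<1/3$, that is, $\alpha>\alpha_0$.

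\emph{Ideal vertices.} Let $v$ be an ideal vertex. Intersecting $P$ with a small horosphere centred at $v$ gives a compact Euclidean $3$-polytope. Its dihedral angles are dihedral angles of $P$, hence all equal to $\alpha<\pi/2$. By Coxeter's theorem this cross-section is a Euclidean tetrahedron, so exactly four facets pass through $v$. The Gram matrix of their normals has the same form as $G$ above and must be positive semidefinite of rank $3$. This forces $1-3\cos\alpha=0$, that is, $\alpha=\alpha_0$. Its kernel is spanned by $(1,1,1,1)$, consistent with the parabolic criterion.

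\emph{Conclusion.} Every vertex, finite or ideal, lies in exactly four facets, so $P$ is simple. A finite-volume polytope has at least one vertex. A finite vertex gives $\alpha>\alpha_0$ and an ideal vertex gives $\alpha=\alpha_0$; in both cases $\alpha\ge\alpha_0$. If $\alpha=\alpha_0$, no vertex can be finite, because that would need the strict inequality, so all vertices are ideal. If $\alpha>\alpha_0$, no vertex can be ideal, so all vertices are finite. A finite-volume polytope without ideal vertices is compact.

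\emph{Main obstacle.} The delicate step is justifying that the horospherical cross-section at an ideal vertex is a compact Euclidean polytope whose dihedral angles are exactly those of $P$ between the facets through $v$. This is the standard cusp-link fact for finite-volume polytopes and can be cited from \cite{Vinberg1993}. Alternatively, one can argue directly: the facets through $v$ meeting along a common edge-ray at $v$ form a Euclidean angle on the horosphere equal to the hyperbolic dihedral angle.
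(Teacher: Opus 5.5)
Your proposal is correct and follows the paper's proof essentially step for step. It uses the non-obtuse intersection theorem to get four facets at each finite vertex, and Coxeter's theorem on the horospherical link to get four facets at each ideal vertex. It then reads off from the eigenvalues $1+c$ and $1-3c$ of $(1+c)I_4-cJ_4$ that $c<1/3$ at finite vertices and $c=1/3$ at ideal ones. Your explicit remarks that $P$ has at least one vertex, and that a finite-volume polytope without ideal vertices is compact, only spell out what the paper leaves implicit in its closing line ``this is equivalent to the asserted angle range and the description of the vertices.''
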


\begin{proof}
An ordinary vertex belongs to four facets by the non-obtuse intersection
theorem.  The horospherical link of an ideal vertex is a compact Euclidean
$3$-polytope with all dihedral angles equal to $\alpha<\pi/2$.  By Coxeter's
Euclidean theorem, this link is a tetrahedron.  Hence an ideal vertex also
belongs to four facets, and $P$ is simple.

The link at a finite vertex is a regular spherical tetrahedron with dihedral
angle $\alpha$.  At an ideal vertex it is the corresponding Euclidean
tetrahedron.  Put $c=\cos\alpha$.  The Gram matrix of the four outward face
normals of either link is
\begin{equation}\label{eq:link-Gram}
 H_\alpha=(1+c)I_4-cJ_4.
\end{equation}
Its eigenvalues are
\[
 1+c\quad\text{with multiplicity }3,
 \qquad
 1-3c\quad\text{with multiplicity }1.
\]
A spherical link requires positive definiteness, and a Euclidean link
requires positive semidefiniteness of rank $3$.  Thus $c\leq1/3$, with
equality precisely at an ideal vertex.  This is equivalent to the asserted
angle range and the description of the vertices.
\end{proof}

We write $f_i(P)$ for the number of $i$-faces of $P$.  Since $P$ is simple,
\begin{equation}\label{eq:simple-f-relations}
 f_1(P)=2f_0(P),
 \qquad
 f_2(P)=f_0(P)+f_3(P).
\end{equation}
The first identity counts vertex-edge incidences and the second follows from
Euler's relation for the boundary $3$-sphere.

\subsection{The regular 4-simplex}

Let $\Delta^4_\alpha$ be a regular hyperbolic $4$-simplex with dihedral angle
$\alpha$.  The Gram matrix of its five outward facet normals is
\begin{equation}\label{eq:simplex-Gram}
 G^\Delta_\alpha=(1+c)I_5-cJ_5,
 \qquad c=\cos\alpha.
\end{equation}
Its eigenvalues are $1+c$ with multiplicity $4$ and $1-4c$ with
multiplicity $1$.  The hyperbolic signature criterion, together with the
vertex-link criterion in Proposition~\ref{prop:simple-angle-range}, gives the
following standard conclusion.

\begin{proposition}\label{prop:simplex-existence}
The finite-volume regular hyperbolic $4$-simplex $\Delta^4_\alpha$ exists
exactly for
\[
 \alpha_0\leq\alpha<\alpha_1.
\]
It is ideal at $\alpha=\alpha_0$, compact for
$\alpha_0<\alpha<\alpha_1$, and undergoes Euclidean degeneration as
$\alpha\to\alpha_1$.  Every equiangular hyperbolic $4$-simplex with angle
$\alpha$ is isometric to $\Delta^4_\alpha$.
\end{proposition}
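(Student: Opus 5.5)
The plan is to use Vinberg's characterization of acute-angled simplices through their Gram matrices. A hyperbolic $n$-simplex with prescribed dihedral angles exists, and is unique up to isometry, exactly when the Gram matrix $G=(-\cos\theta_{ij})$ with unit diagonal has signature $(n,1)$ and every principal $n\times n$ submatrix is either positive definite (finite vertex) or positive semidefinite of rank $n-1$ (ideal vertex). The simplex has finite volume precisely under these conditions. Uniqueness holds because the Gram matrix determines the normals $u_1,\dots,u_5$ up to an isometry of $\RR^{4,1}$. Any Lorentzian isometry carrying one Gram-realizing frame to another preserves the simplex; after composing with $x\mapsto -x$ if necessary to fix the sheet, it induces an isometry of $\HH^4$.

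\emph{Step 1: signature.} Apply this criterion to $G^\Delta_\alpha=(1+c)I_5-cJ_5$ with $0<c<1$. Its eigenvalues are $1+c>0$ with multiplicity $4$ and $1-4c$. Hence $G^\Delta_\alpha$ has signature $(4,1)$ exactly when $c>1/4$, that is $\alpha<\alpha_1$. At $c=1/4$ the matrix is positive semidefinite of rank $4$; this is the Gram matrix of a Euclidean regular simplex, which explains the Euclidean degeneration at $\alpha_1$. For $c<1/4$ the matrix is positive definite, which gives a spherical simplex. So no hyperbolic simplex exists for $\alpha\ge\alpha_1$.

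\emph{Step 2: vertices.} The principal $4\times4$ minors of $G^\Delta_\alpha$ are all equal to $H_\alpha$. By Proposition~\ref{prop:simple-angle-range}, $H_\alpha$ is positive definite for $c<1/3$ and semidefinite of rank $3$ at $c=1/3$, with the positive kernel vector $(1,1,1,1)$. For $c>1/3$ it is indefinite, which makes the vertices hyperideal, so the volume is not finite. Combining the two steps, $\Delta^4_\alpha$ exists with finite volume iff $1/4<c\le 1/3$, i.e.\ $\alpha_0\le\alpha<\alpha_1$. It is ideal at $\alpha_0$ and compact otherwise in this range.

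\emph{Step 3: construction and uniqueness.} To realize the simplex, take a basis $u_1,\dots,u_5$ of $\RR^{4,1}$ with Gram matrix $G^\Delta_\alpha$, which exists by Sylvester's law. Let $P=\{x\in\HH^4:\langle x,u_i\rangle\le 0\}$. Its vertices are the points dual to the rows of $G^{-1}$, and one checks that they lie in $\overline{\HH^4}$ on the correct sheet. Uniqueness then follows from the argument in the first paragraph. Any equiangular $4$-simplex with angle $\alpha$ has every pair of facets adjacent, so its Gram matrix is exactly $G^\Delta_\alpha$ and it is isometric to $\Delta^4_\alpha$.

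The main obstacle is the sign and sheet check in Step 3. One must verify that the dual vertex vectors $G^{-1}$-combinations are timelike or lightlike, that they all lie in one sheet, and that the region really is the simplex. For $1/4<c<1$, $(G^{-1})_{ii}$ and the off-diagonal entries of $G^{-1}$ are all negative, and the explicit inverse $\frac{1}{1+c}\bigl(I+\frac{c}{1-4c}J\bigr)$ makes this transparent. I would do this computation explicitly rather than cite the general theorem.
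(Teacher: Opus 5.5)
Your proposal is correct and follows the same route as the paper: the signature of $G^\Delta_\alpha$, the $4\times4$ vertex submatrices $H_\alpha$, and the fact that the Gram matrix determines the simplex up to isometry. Your explicit inverse $G^{-1}$ adds a realizability check that the paper leaves to the standard criterion.

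One slip to fix: $(G^{-1})_{ii}=\frac{1-3c}{(1+c)(1-4c)}$ is negative only for $1/4<c<1/3$. It vanishes at $c=1/3$ (ideal vertices) and is positive for $c>1/3$ (hyperideal vertices, matching your Step~2), so it is not negative on all of $1/4<c<1$. This does not damage the argument, since only the range $1/4<c\le1/3$ is needed.
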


\begin{proof}
The matrix in \eqref{eq:simplex-Gram} has signature $(4,1)$ precisely when
$1-4c<0$, or $\alpha<\alpha_1$.  Its four-by-four vertex submatrices are
positive definite for $c<1/3$ and parabolic of rank $3$ for $c=1/3$.
Finally, the common dihedral angle determines every entry of the Gram matrix,
and the Gram matrix determines the simplex up to hyperbolic isometry.
\end{proof}

\subsection{The regular equiangular 4-cube}

The existence of the second model is particularly transparent in the Klein
ball.  For $0<r\leq1/2$, consider the Euclidean cube
\[
 [-r,r]^4\subset\overline{\mathbb B^4}.
\]
The outward Lorentzian unit normals to the two facets $x_i=\pm r$ can be
chosen as
\[
 u_i^\pm=\frac{(r,\pm e_i)}{\sqrt{1-r^2}}.
\]
For $i\neq j$ one has
\[
 \langle u_i^\varepsilon,u_j^\delta\rangle
 =-\frac{r^2}{1-r^2}.
\]
Consequently all adjacent facets meet at angle $\alpha$ if
\begin{equation}\label{eq:cube-radius}
 r^2=\frac{c}{1+c},
 \qquad c=\cos\alpha.
\end{equation}
All vertices lie in the closed Klein ball exactly when $4r^2\leq1$, which is
equivalent to $c\leq1/3$.

\begin{proposition}\label{prop:cube-existence-uniqueness}
For every
\[
 \alpha_0\leq\alpha<\frac{\pi}{2}
\]
there exists a finite-volume regular equiangular hyperbolic $4$-cube
$C^4_\alpha$.  It is ideal at $\alpha=\alpha_0$ and compact for
$\alpha>\alpha_0$.  Moreover, if $\alpha>\alpha_0$, every equiangular
hyperbolic $4$-polytope with angle $\alpha$ that is combinatorially
equivalent to the $4$-cube is isometric to $C^4_\alpha$.
\end{proposition}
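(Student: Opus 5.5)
Existence is read off directly from the Klein-ball construction given just before the statement. For $\alpha_0\le\alpha<\pi/2$ we have $0<c\le 1/3$, and we take $r$ from \eqref{eq:cube-radius}. Then $r^2=c/(1+c)\le 1/4$, so every vertex $(\pm r,\dots,\pm r)$ has Euclidean norm $2r\le1$.

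We first check that the Euclidean cube $[-r,r]^4$ is a hyperbolic polytope with the required angles. Its vertices lie in the open ball when $c<1/3$ and on the sphere when $c=1/3$. Its edges and higher faces otherwise lie inside the ball, because all non-vertex boundary points have strictly smaller norm. Adjacent facets $x_i=\pm r$ and $x_j=\pm r$ with $i\neq j$ have $\langle u_i^\varepsilon,u_j^\delta\rangle=-r^2/(1-r^2)=-c$, so they meet at angle $\alpha$. Opposite facets $x_i=r$ and $x_i=-r$ do not meet, and no further angles need checking. Hence the polytope is compact for $\alpha>\alpha_0$ and ideal at $\alpha_0$. At $\alpha_0$ all $16$ vertices lie on the sphere and no other point does, so the volume is finite. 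This is consistent with Proposition~\ref{prop:simple-angle-range}.

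For uniqueness when $\alpha>\alpha_0$, I would use that the Gram matrix of the eight outward normals determines the polytope up to isometry. This holds because the normals span $\RR^{4,1}$ and the Gram matrix fixes them up to a Lorentz transformation. In a combinatorial $4$-cube, each facet $F$ is adjacent to every other facet except one, its opposite $\bar F$. So every entry of the Gram matrix equals $-c$, except the four entries $\langle u_F,u_{\bar F}\rangle=-\cosh d_F$ with $d_F>0$ the distance between the opposite facets. The hard step is showing that the $d_F$ are forced.

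First I would show that the eight normals have rank $5$. Then the $8\times8$ Gram matrix has rank $\le5$, so it has a $3$-dimensional kernel. I would write it as
\[
G=(1+c)I-cJ+\sum_{F}\bigl(c-\cosh d_F\bigr)\bigl(E_{F\bar F}+E_{\bar F F}\bigr).
\]
Ordering the facets in pairs, the matrix is $-cJ+\operatorname{diag}(B_1,\dots,B_4)$, where
\[
B_k=\begin{pmatrix}1+c & 2c-t_k\\ 2c-t_k & 1+c\end{pmatrix},\qquad t_k=\cosh d_k+c .
\]
Each $B_k$ has eigenvector $(1,-1)$ with eigenvalue $1-c+t_k>0$. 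Those four vectors are orthogonal to $J$, so they are eigenvectors of $G$ with positive eigenvalues. On the complementary space of pair-symmetric vectors, $G$ acts as $\operatorname{diag}(1+3c-t_k)-2cJ_4$, scaled appropriately. Signature $(4,1)$ and rank $5$ then require this $4\times4$ matrix to have rank $1$ with a negative eigenvalue. A diagonal matrix minus a rank-one matrix has rank $\le1$ only if the diagonal part vanishes, or if it has rank $\le2$ and special structure. Positivity of the other eigenvalues forces all $1+3c-t_k=0$, that is, $\cosh d_k=1+2c$ for every $k$. That is exactly the cube's value. So the Gram matrix coincides with that of $C^4_\alpha$, and the two polytopes are isometric.

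The main obstacle is making this rank argument rigorous, in particular excluding unequal $t_k$. I would do it with the matrix-determinant lemma: rank $\le1$ of $D-2cJ$ with $D$ diagonal forces $D$ to have rank $\le2$. Then at least two $t_k$ equal $1+3c$, and a direct $2\times2$ minor computation handles the rest. Compactness ($\alpha>\alpha_0$) is used so that each $d_F$ is an honest positive distance and the normals span $\RR^{4,1}$.
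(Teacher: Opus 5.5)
Your proposal is correct. The existence half is the paper's Klein-ball construction written out in more detail, but the uniqueness half takes a genuinely different route.

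The paper settles uniqueness in one line by citing Andreev's rigidity theorem: a compact non-obtuse hyperbolic polytope of dimension at least three is determined by its face lattice and dihedral angles. You instead use that in a combinatorial $4$-cube each facet is adjacent to every other facet except its opposite. So the only unknown Gram entries are the four numbers $g_k=\langle u_F,u_{\bar F}\rangle$, and you let the signature of $\RR^{4,1}$ force them.

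The computation is sound.
\begin{itemize}
\item The pair-antisymmetric vectors give four eigenvalues $1-g_k$, which are positive because $g_k<1$ for outward normals of two facets.
\item A Gram matrix of vectors in $\RR^{4,1}$ has at most four positive eigenvalues and rank at most five.
\item Hence the pair-symmetric block $M=\operatorname{diag}(s_k)-2cJ_4$, with $s_k=1+2c+g_k$ (your $1+3c-t_k$), is negative semidefinite of rank at most one.
\end{itemize}
The cleanest finish is the one you hint at in your last paragraph, without the detour through the matrix-determinant lemma. For distinct $i,j,k$, the $2\times2$ minor of $M$ on rows $\{i,k\}$ and columns $\{j,k\}$ equals $-2c\,s_k$. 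Since $c>0$, this gives $s_k=0$ for every $k$, i.e.\ $\cosh d_k=1+2c=(1+r^2)/(1-r^2)$, which is exactly the opposite-facet value of $C^4_\alpha$. Also, your correction sum should run over the four opposite pairs rather than all eight facets, or it double counts.

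What each route buys:
\begin{itemize}
\item The paper's argument is short, but it rests on a deep theorem that, as cited, applies only to compact polytopes.
\item Yours is elementary and self-contained, and it gives the distance between opposite facets explicitly.
\item Your argument does not really need compactness. The rank-five spanning property need not be proved in advance: it follows once the Gram matrix is forced to equal the cube's, which has rank five. Granting the standard fact that facet normals spanning $\RR^{4,1}$ determine a finite-volume polytope up to isometry, your argument also gives uniqueness at the ideal endpoint $\alpha=\alpha_0$, which the paper deliberately leaves aside.
\end{itemize}
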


\begin{proof}
Existence and the assertion about vertices follow from
\eqref{eq:cube-radius}.  If $\alpha>\alpha_0$, the polytope is compact by
Proposition~\ref{prop:simple-angle-range}.  Andreev's rigidity theorem states
that a compact non-obtuse hyperbolic polytope of dimension at least three is
determined, up to isometry, by its face lattice and its dihedral angles; see
\cite{Andreev1970Convex}.  It therefore gives the
asserted uniqueness in the compact range.  Notice that no uniqueness claim
at the ideal endpoint $\alpha=\alpha_0$ is needed below.
\end{proof}

\section{The flag dichotomy}\label{sec:flag}

Let $P^*$ denote the combinatorial dual of the simple polytope $P$, and put
\[
 \Delta=\partial P^*.
\]
Then $\Delta$ is a simplicial $3$-sphere.  A set of vertices of a simplicial
complex is a \emph{missing face} if it does not span a simplex but every
proper subset does.  The complex is \emph{flag} if all its missing faces have
two vertices.

\begin{lemma}[Flag dichotomy]\label{lem:flag-dichotomy}
Let $P\subset\HH^4$ be as in Proposition~\ref{prop:simple-angle-range}.  Then
either $P$ is a $4$-simplex or $\Delta=\partial P^*$ is flag.
\end{lemma}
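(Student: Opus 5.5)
The plan is to translate a missing face of $\Delta$ into a collection of facets of $P$ that pairwise meet at angle $\alpha$ but have no common face. I will then show, using the Gram criteria recalled in Section~\ref{sec:geometry}, that such a collection must in fact have a common face unless $P$ is a simplex. Recall that vertices of $\Delta=\partial P^*$ are facets $F_1,\dots,F_m$ of $P$, and that a set of vertices spans a simplex of $\Delta$ exactly when the corresponding facets have a nonempty common face of $P$.

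Let $I$ be a missing face of $\Delta$ with $k=|I|\geq3$. Every $2$-subset of $I$ spans an edge of $\Delta$, so the facets $F_i$, $i\in I$, pairwise meet along a $2$-face of $P$. Since $P$ is equiangular, they meet at dihedral angle $\alpha$. By \eqref{eq:dihedral-Gram}, the principal Gram submatrix is
\[
 G_I=(1+c)I_k-cJ_k,\qquad c=\cos\alpha\in(0,1/3],
\]
where $c\leq 1/3$ comes from Proposition~\ref{prop:simple-angle-range}. Its eigenvalues are $1+c$, with multiplicity $k-1$, and $1-(k-1)c$, on the all-ones vector. I split by $k$.

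\emph{Case $k=3$.} Here $1-2c\geq 1/3>0$, so $G_I$ is positive definite. By the elliptic criterion, the three facets share a face of codimension $3$, namely an edge. Then $I$ spans a $2$-simplex of $\Delta$, which is a contradiction.

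\emph{Case $k=4$.} If $c<1/3$, then $1-3c>0$ and $G_I$ is positive definite. The elliptic criterion gives a common vertex, so $I$ is a $3$-simplex of $\Delta$, a contradiction. If $c=1/3$, then $G_I$ is positive semidefinite of rank $3=n-1$ with $|I|=4=n$. It is indecomposable because all off-diagonal entries equal $-c\neq0$, and its kernel is spanned by the positive vector $(1,1,1,1)$. The parabolic criterion then gives a common ideal vertex. Since the dual of $P$ is taken with ideal vertices counted as vertices, and these are simple by Proposition~\ref{prop:simple-angle-range}, this again makes $I$ a face of $\Delta$, a contradiction.

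\emph{Case $k\geq5$.} This part is purely combinatorial. All proper subsets of $I$ are faces of $\Delta$, but $\Delta$ has dimension $3$, so its faces have at most $4$ vertices. This forces $k=5$. In that case $\Delta$ contains the full boundary of a $4$-simplex, which is itself a $3$-sphere. A $3$-sphere cannot properly contain another $3$-sphere as a subcomplex, by invariance of domain or the pseudomanifold property. Hence $\Delta$ equals this boundary, so $P^*$, and therefore $P$, is combinatorially a $4$-simplex.

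The main obstacle is the endpoint $c=1/3$ in the case $k=4$. There I must check carefully that the parabolic hypotheses of Andreev's criterion hold (indecomposability and a positive kernel vector), and that ideal vertices are correctly counted as faces of $\Delta$. Everything else is an eigenvalue computation together with a short combinatorial argument.
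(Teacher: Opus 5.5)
Your proof is correct and is essentially the paper's argument: the same eigenvalue computation for $(1+c)I_k-cJ_k$ with the elliptic criterion for $k=3,4$ (and the parabolic criterion at $\alpha=\alpha_0$, $k=4$), the dimension count ruling out $k\geq6$, and invariance of domain forcing $\Delta$ to be the boundary of a $4$-simplex when $k=5$. You also state explicitly that ideal vertices must be counted as vertices of $P^*$, which the paper leaves implicit.
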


\begin{proof}
Suppose that $\Delta$ has a missing face with $k\geq3$ vertices.  The
corresponding $k$ facets of $P$ meet pairwise, so their Gram matrix is
\begin{equation}\label{eq:k-face-Gram}
 G_k=(1+c)I_k-cJ_k.
\end{equation}
Its eigenvalues are $1+c$ with multiplicity $k-1$ and
$1-(k-1)c$ with multiplicity one.

If $k\leq4$ and $\alpha>\alpha_0$, then $G_k$ is positive definite.  The
elliptic face criterion recalled in Section~\ref{sec:geometry} implies that the corresponding
facets have a common face, contrary to the missing-face assumption.  At
$\alpha=\alpha_0$, the same argument applies for $k\leq3$.  For $k=4$, the
matrix is indecomposable and parabolic of rank $3$, with positive kernel
vector $(1,1,1,1)$.  The parabolic face
criterion gives a common ideal vertex, again a contradiction.

Since $\Delta$ has dimension $3$, a missing face has at most five vertices:
if $k\geq6$, any five of its vertices form a proper subset and would have to
span a $4$-simplex in the $3$-dimensional complex $\Delta$.
It remains to consider $k=5$.  In this case the boundary of a $4$-simplex is
a subcomplex of $\Delta$.  It is an embedded simplicial $3$-sphere.  By
invariance of domain, its image is open in the connected $3$-manifold
$|\Delta|$; it is also closed.  Hence it is all of $\Delta$.  Thus $P$ is a
$4$-simplex.
\end{proof}

We now recall the dimension-three case of the Charney--Davis conjecture.

\begin{theorem}[Davis--Okun \cite{DavisOkun2001}]\label{thm:davis-okun}
If $\Delta$ is a flag simplicial $3$-sphere, then
\begin{equation}\label{eq:charney-davis}
 1-\frac{f_0(\Delta)}2+\frac{f_1(\Delta)}4
 -\frac{f_2(\Delta)}8+\frac{f_3(\Delta)}{16}\geq0.
\end{equation}
\end{theorem}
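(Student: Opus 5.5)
The plan is to follow the route of Charney and Davis and interpret the left-hand side of \eqref{eq:charney-davis} as an Euler characteristic that can be computed in $L^2$-homology. Given the flag simplicial $3$-sphere $\Delta$, I form the right-angled Coxeter group $W=W_\Delta$. Its generators are the vertices of $\Delta$, and two generators commute exactly when the corresponding vertices span an edge. Because $\Delta$ is flag, the spherical subsets of generators are exactly the simplices of $\Delta$, the empty simplex included. Hence the Davis complex $\Sigma=\Sigma_W$ carries a piecewise Euclidean cubical structure that is CAT(0) by Gromov's link condition. Since $\Delta$ is a $3$-sphere, $\Sigma$ is a contractible $4$-manifold on which $W$ acts properly and cocompactly.

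The first, routine step is to compute the orbifold Euler characteristic of $\Sigma/W$. Each simplex $\sigma$ of $\Delta$, including the empty one, contributes a cube orbit with stabilizer of order $2^{|\sigma|}$. The resulting alternating sum is
\[
 \chi(W)=\sum_{\sigma\in\Delta}\Bigl(-\frac12\Bigr)^{|\sigma|}
 =1-\frac{f_0}{2}+\frac{f_1}{4}-\frac{f_2}{8}+\frac{f_3}{16}.
\]
By Atiyah's $L^2$-index theorem, in the form valid for proper cocompact actions, this equals the $L^2$-Euler characteristic $\sum_i(-1)^i b_i^{(2)}(\Sigma;W)$.

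The second step is to show that the $L^2$-Betti numbers vanish outside the middle degree $2$; this is the four-dimensional Singer conjecture for $\Sigma$. Once it is established, $\chi(W)=b_2^{(2)}\geq0$, which is exactly the claimed inequality. The extreme degrees are easy. If $W$ is finite then $\Delta$ would be a single simplex, which is not a sphere; so $W$ is infinite, $\Sigma$ is a connected contractible space with an infinite proper action, and $b_0^{(2)}=0$. Poincaré duality for the $4$-manifold $\Sigma$ then gives $b_4^{(2)}=0$ and $b_3^{(2)}=b_1^{(2)}$. Everything therefore reduces to proving $b_1^{(2)}=0$.

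This vanishing of $b_1^{(2)}$ is the main obstacle. I would attack it by induction on dimension, using the vertex links of $\Delta$. The link of each vertex $v$ is a flag $2$-sphere $L_v$. Its Davis complex $\Sigma_{L_v}$ is a $3$-manifold, and by Andreev's theorem it is realized as a right-angled hyperbolic reflection group, or more generally a CAT(0) $3$-manifold. For such $3$-manifolds, $L^2$-homology vanishes in all degrees, by Lott--Lück or by the Dodziuk argument for hyperbolic manifolds. I would decompose $\Sigma$ relative to the half-spaces, or ``walls'', associated with the generators of $W$. A Mayer--Vietoris argument in $L^2$-homology, organized along the filtration by removing one generator at a time (the Davis--Okun strategy), then shows the following: the contributions to $b_1^{(2)}$ of each stage are controlled by the $L^2$-homology of the link complexes $\Sigma_{L_v}$, which vanishes. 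This forces $b_1^{(2)}(\Sigma)=0$. The delicate points are the exactness and weak-isomorphism properties of the Mayer--Vietoris sequences for Hilbert $W$-modules, and the careful identification of the relative terms with induced $L^2$-homology of the link groups. I expect essentially all the real work to lie there, and in the paper I would simply cite \cite{DavisOkun2001} for this step.
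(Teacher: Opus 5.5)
Your argument is correct and follows the route the paper relies on: the left side of \eqref{eq:charney-davis} is the orbifold Euler characteristic $\sum_{\sigma}(-1/2)^{|\sigma|}$ of $W_\Delta$, hence the $L^2$-Euler characteristic of the Davis complex, and the vanishing of $b_i^{(2)}$ for $i\neq2$ is exactly what the paper imports by citing \cite{DavisOkun2001}, as you also do. One caution about your heuristic for that step: after the first vertex is removed, the links in the filtration are proper full subcomplexes of the $L_v$, whose $L^2$-homology need not vanish (the degree-$1$ relative terms see $b_0^{(2)}$ of these links, which is positive once such a link is a simplex), so the filtration cannot kill $b_1^{(2)}$ directly; instead it needs $b_i^{(2)}(\Sigma_A)=0$ for $i\geq2$ and every full subcomplex $A$ of a flag $2$-sphere (a consequence of the $3$-dimensional case), which kills $b_3^{(2)}$, and then $b_1^{(2)}=b_3^{(2)}$ by duality.
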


Using the Dehn--Sommerville relations for a simplicial $3$-sphere,
\eqref{eq:charney-davis} is equivalent to
\begin{equation}\label{eq:CD-short}
 f_3(\Delta)-4f_0(\Delta)+16\geq0.
\end{equation}
Duality therefore gives the following form that will enter the volume
comparison.

\begin{corollary}\label{cor:f-inequality}
If $P$ is not a simplex, then
\begin{equation}\label{eq:primal-CD}
 f_0(P)-4f_3(P)+16\geq0,
\end{equation}
or equivalently
\begin{equation}\label{eq:primal-CD-shift}
 f_0(P)-16\geq4\bigl(f_3(P)-8\bigr).
\end{equation}
\end{corollary}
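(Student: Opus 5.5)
The plan is to combine the flag dichotomy with the Davis--Okun inequality and then translate face numbers through duality. Since $P$ is not a simplex, Lemma~\ref{lem:flag-dichotomy} shows that $\Delta=\partial P^*$ is a flag simplicial $3$-sphere. Theorem~\ref{thm:davis-okun} then applies, and in the reduced form \eqref{eq:CD-short} it reads $f_3(\Delta)-4f_0(\Delta)+16\geq0$.

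The first step is to check the reduction from \eqref{eq:charney-davis} to \eqref{eq:CD-short}, since the paper only asserts it. For a simplicial $3$-sphere we have $f_2=2f_3$, because each tetrahedron has four triangles and each triangle lies in two tetrahedra. Euler's relation $f_0-f_1+f_2-f_3=0$ then gives $f_1=f_0+f_3$. Substituting these into $1-f_0/2+f_1/4-f_2/8+f_3/16$ gives
\[
1-\frac{f_0}{2}+\frac{f_0+f_3}{4}-\frac{f_3}{4}+\frac{f_3}{16}
=1-\frac{f_0}{4}+\frac{f_3}{16}.
\]
Multiplying by $16$ yields $f_3-4f_0+16\geq0$, which is \eqref{eq:CD-short}.

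The second step is duality. Vertices of $\Delta$ correspond to facets of $P$, and facets of $\Delta$ correspond to vertices of $P$. Hence $f_0(\Delta)=f_3(P)$ and $f_3(\Delta)=f_0(P)$. Substituting these gives \eqref{eq:primal-CD}. Rearranging $f_0(P)+16\geq 4f_3(P)$ as $f_0(P)-16\geq 4f_3(P)-32=4(f_3(P)-8)$ gives \eqref{eq:primal-CD-shift}.

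I see no real obstacle here. The only point needing care is that Lemma~\ref{lem:flag-dichotomy} requires $P$ to satisfy the hypotheses of Proposition~\ref{prop:simple-angle-range}, namely finite volume and acute equiangularity. These hold in the standing setting. Simplicity of $P$, which that proposition also provides, is what makes $P^*$ simplicial and $\Delta$ a simplicial $3$-sphere.
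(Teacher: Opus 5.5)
Your proof is correct and takes the same route as the paper. The flag dichotomy lemma makes $\Delta=\partial P^*$ a flag simplicial $3$-sphere, Davis--Okun in the reduced form $f_3(\Delta)-4f_0(\Delta)+16\geq0$ applies, and the duality $f_0(\Delta)=f_3(P)$, $f_3(\Delta)=f_0(P)$ gives the claim. Your check of the reduction via $f_2=2f_3$ and $f_1=f_0+f_3$ is accurate and supplies the Dehn--Sommerville step that the paper only asserts.
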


We shall also need the equality case for the smallest possible number of
vertices of a flag $3$-sphere.

\begin{lemma}\label{lem:flag-eight}
Every flag simplicial $3$-sphere has at least eight vertices.  If it has
exactly eight vertices, it is the boundary of the $4$-dimensional cross
polytope.
\end{lemma}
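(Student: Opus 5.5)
We will argue by induction through vertex links, using the fact that links in a flag complex are flag. Let $\Delta$ be a flag simplicial $3$-sphere and $v$ a vertex. The link $\lk(v)$ is a simplicial $2$-sphere. It is a full subcomplex of $\Delta$: if vertices $w_1,\dots,w_j$ of $\lk(v)$ span a simplex $\sigma$ of $\Delta$, then all pairs among $v,w_1,\dots,w_j$ are edges. Flagness then puts $\sigma\cup\{v\}$ in $\Delta$, so $\sigma\in\lk(v)$. A full subcomplex of a flag complex is flag, so $\lk(v)$ is a flag $2$-sphere.

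The first step is the analogous statement one dimension down. A flag simplicial $2$-sphere has at least $6$ vertices, with equality only for the octahedron. Here again a vertex link is a flag $1$-sphere, i.e.\ an induced cycle of length at least $4$, since a $3$-cycle would be a missing triangle. So every vertex has degree at least $4$. Then $2f_1\ge 4f_0$, and combining this with $f_1=3f_0-6$ gives $f_0\ge 6$. If $f_0=6$, all degrees equal $4$. Take $v$ with link the $4$-cycle $a b c d$; the one remaining vertex $w$ is not adjacent to $v$. Each of $a,b,c,d$ already has neighbours $v$ and its two cycle-neighbours. Its fourth neighbour cannot be the opposite cycle vertex, because the link is induced, so it must be $w$. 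This is the octahedron.

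Back in dimension $3$, pick any vertex $v$. Then $\lk(v)$ has at least $6$ vertices, and $v$ itself is not in the link. We also need a vertex outside the closed star of $v$. If none existed, $\Delta$ would be the cone $\st(v)$, a $3$-ball with nonempty boundary, which is impossible for a closed $3$-manifold. This gives $f_0(\Delta)\ge 1+6+1=8$.

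For the equality case, $f_0=8$ forces every vertex link to have exactly $6$ vertices, so by the first step every link is an octahedron, and for each $v$ exactly one vertex $v'$ is not adjacent to $v$. The map $v\mapsto v'$ is an involution without fixed points, splitting the $8$ vertices into $4$ antipodal pairs, and the $1$-skeleton is the complete $4$-partite graph $K_{2,2,2,2}$. Flagness makes $\Delta$ the clique complex of this graph. That clique complex is the boundary of the cross polytope: its maximal cliques choose one vertex from each pair, giving $16$ tetrahedra. So $\Delta$ equals the boundary of the $4$-dimensional cross polytope.

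The step needing the most care is showing that a flag sphere is determined by its $1$-skeleton and that vertex links are full. Both are short but must be stated, and the topological fact that a cone cannot be a closed sphere must be invoked. Everything else is counting.
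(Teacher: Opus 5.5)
Your proof is correct. The lower bound follows the paper's argument: the link is a flag $2$-sphere, minimum degree $4$ plus Euler gives at least six link vertices, and a vertex outside the closed star exists because a flag sphere cannot be a cone. You also justify that links are full subcomplexes, hence flag, which the paper only asserts.

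One step should be made explicit. When you say that otherwise $\Delta$ would be the cone $\st(v)$, the reason is flagness again: if every vertex is adjacent to $v$, then $\sigma\cup\{v\}$ is a clique, hence a simplex, for every $\sigma$. Without flagness the claim fails, as the boundary of a $4$-simplex shows.

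The genuine difference is in the equality case.

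\begin{itemize}
\item The paper works at a single vertex. It uses that $\lk(v)$ is the octahedron and argues that the antistar of $v$, having a single interior vertex $w$, must be the cone $w*\lk(v)$. This makes $\Delta$ the suspension of the octahedron.
\item You instead apply the count $1+6+1=8$ at every vertex. So each vertex has exactly one non-neighbour, the complement of the $1$-skeleton is a perfect matching, and the $1$-skeleton is $K_{2,2,2,2}$. Flagness then identifies $\Delta$ with its clique complex.
\end{itemize}

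Your route is symmetric and purely graph-theoretic. It avoids the pseudomanifold reasoning that turns ``every tetrahedron of the antistar contains $w$'' into ``the antistar is $w*\lk(v)$.'' It also does not need the links to be octahedra, only the bound $f_0(\lk v)\ge 6$; your remark that every link is an octahedron is superfluous there. It is exactly the perfect-matching argument the paper uses one dimension down, while you handle dimension two by hand. The paper's route, in exchange, exhibits the suspension structure $\{v,w\}*\lk(v)$ directly, matching the inductive description of the cross-polytope as an iterated suspension.
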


\begin{proof}
The link of a vertex $v$ is a flag triangulation of $\SSph^2$.  Such a
triangulation has at least six vertices; with six vertices it is the boundary
of the octahedron.  Indeed, flagness rules out vertices of degree three.
For a triangulated $2$-sphere, Euler's relation gives
$2f_1=6f_0-12$; hence minimum degree at least four implies $f_0\geq6$.
When $f_0=6$, every vertex has degree four, so the missing edges form a
perfect matching in $K_6$.  The $1$-skeleton is therefore the octahedral
graph, and flagness identifies the complex with the boundary of the
octahedron.

Since $\Delta$ is flag, if every vertex belonged to the closed star of $v$, then for every simplex $\sigma$ not containing $v$, the vertices of $\sigma \cup \{v\}$ would form a clique and hence a simplex. 
Thus $\Delta$ would be a cone with apex $v$, which is impossible for a simplicial sphere. Therefore some vertex lies outside $\st(v)$.
Thus there are at least
$1+6+1=8$ vertices.  Here the \emph{antistar} of $v$ is the subcomplex
consisting of the simplices not containing $v$.  If equality holds, the
antistar has a single vertex $w$ outside its boundary $\lk(v)$.  No
tetrahedron in the antistar can have all four vertices in $\lk(v)$: those
four vertices, together with $v$, would form a clique of five vertices,
contradicting flagness and the dimension of $\Delta$.  Therefore every
tetrahedron of the antistar contains $w$, so the antistar is the cone
$w*\lk(v)$.  Hence the whole complex is the suspension of the octahedral
$2$-sphere, which is the boundary of the $4$-cross-polytope.
\end{proof}

In primal language, Lemma~\ref{lem:flag-eight} says that every nonsimplex in
our class has at least eight facets; equality forces the combinatorial type
of the $4$-cube.

\section{The Gram--Euler volume formula}\label{sec:volume}

\subsection{The regular spherical link}

For $\alpha_0\leq\alpha<\pi/2$, let $L_\alpha$ be the regular spherical
tetrahedron with dihedral angle $\alpha$; at $\alpha=\alpha_0$ it is
understood as the degenerate Euclidean limit.  Define its normalized volume
by
\begin{equation}\label{eq:omega-def}
 \omega(\alpha)=\frac{\Vol_{\SSph^3}(L_\alpha)}{\Vol(\SSph^3)}
 =\frac{\Vol_{\SSph^3}(L_\alpha)}{2\pi^2}.
\end{equation}
Let $\beta(\alpha)$ be the common spherical edge length of $L_\alpha$.
Inverting the matrix in \eqref{eq:link-Gram}, or applying spherical duality,
gives
\begin{equation}\label{eq:beta}
 \cos\beta(\alpha)=\frac{\cos\alpha}{1-2\cos\alpha}.
\end{equation}
Thus $\beta$ increases from $0$ to $\pi/2$ as $\alpha$ increases from
$\alpha_0$ to $\pi/2$, and
\begin{equation}\label{eq:beta-alpha1}
 \beta(\alpha_1)=\frac{\pi}{3}.
\end{equation}

The spherical Schl\"afli formula for a $3$-simplex is
\[
 d\Vol_{\SSph^3}=\frac12\sum_e \ell_e\,d\theta_e;
\]
see, for example, \cite{Milnor1994}.  Since $L_\alpha$ has six equal edges,
we obtain
\begin{equation}\label{eq:omega-derivative}
 \omega'(\alpha)=\frac{3\beta(\alpha)}{2\pi^2}.
\end{equation}
At $\alpha=\pi/2$, the link is a spherical orthant, so
\begin{equation}\label{eq:omega-endpoint}
 \omega\left(\frac{\pi}{2}\right)=\frac1{16}.
\end{equation}

\subsection{Volume as a function of the face vector}

The non-Euclidean Gram--Euler relation goes back to Sommerville
\cite{Sommerville1927}; see also \cite{Heckman1995} for the same
normalization in the Coxeter case.  For a proper face $F$ of $P$, let
$\angle(F)$ denote the volume of the transverse tangent cone of $P$ along $F$, divided by the
volume of the corresponding unit sphere.  At an ideal vertex this normalized
angle is understood to be zero.  In dimension four the Gram--Euler relation
for a compact convex hyperbolic polytope gives
\begin{equation}\label{eq:Gram-Euler-general}
 \frac{3}{4\pi^2}\Vol(P)
 =1-\sum_{F^3}\angle(F^3)+\sum_{F^2}\angle(F^2)
 -\sum_{F^1}\angle(F^1)+\sum_{F^0}\angle(F^0).
\end{equation}
For a simple equiangular $4$-polytope the four kinds of proper face angles
are, respectively,
\[
 \frac12,
 \qquad
 \frac{\alpha}{2\pi},
 \qquad
 \frac{3\alpha-\pi}{4\pi},
 \qquad
 \omega(\alpha).
\]
Indeed, the normal link at an edge is a spherical triangle with all three
angles equal to $\alpha$, and hence has area $3\alpha-\pi$.  Formula
\eqref{eq:Gram-Euler-general} therefore becomes
\begin{equation}\label{eq:Gram-Euler-f}
 \frac{3}{4\pi^2}\Vol(P)
 =1-\frac{f_3}2+\frac{\alpha}{2\pi}f_2
 -\frac{3\alpha-\pi}{4\pi}f_1+\omega(\alpha)f_0.
\end{equation}
The same formula holds at the finite-volume endpoint
$\alpha=\alpha_0$.  For completeness, truncate every cusp by a totally
geodesic hyperplane and let the truncation hyperplanes tend to the
corresponding ideal points.  The compact Gram--Euler relation applies to the
truncated polytopes, whose volumes converge to $\Vol(P)$.  The terms carried
by the old faces converge to those in
\eqref{eq:Gram-Euler-general}.  At each cusp, the alternating sum of the
terms carried by the new truncation face and its subfaces converges to the
Euclidean Gram--Euler sum of the horospherical link, and hence is zero.
Thus passage to the limit proves \eqref{eq:Gram-Euler-f}; in particular, the
ideal vertex-angle term is zero.

Using \eqref{eq:simple-f-relations}, set
\begin{equation}\label{eq:a-b-def}
 a(\alpha)=\omega(\alpha)+\frac12-\frac{\alpha}{\pi},
 \qquad
 b(\alpha)=\frac12-\frac{\alpha}{2\pi}.
\end{equation}
We obtain the key formula
\begin{equation}\label{eq:volume-linear}
 v(P):=\frac{3}{4\pi^2}\Vol(P)
 =1+a(\alpha)f_0(P)-b(\alpha)f_3(P).
\end{equation}

For the two model polytopes this gives
\begin{equation}\label{eq:volume-simplex}
 v(\Delta^4_\alpha)=1+5a(\alpha)-5b(\alpha),
\end{equation}
and
\begin{equation}\label{eq:volume-cube}
 v(C^4_\alpha)=1+16a(\alpha)-8b(\alpha).
\end{equation}

\subsection{The coefficient inequalities}

The following elementary analytic facts contain all the angle dependence of
the proof.

\begin{lemma}\label{lem:coefficients}
For $\alpha_0\leq\alpha<\pi/2$, one has
\begin{equation}\label{eq:a-positive}
 a(\alpha)>0
\end{equation}
and
\begin{equation}\label{eq:4a-b-positive}
 4a(\alpha)-b(\alpha)>0.
\end{equation}
Moreover, for $\alpha_0\leq\alpha\leq\alpha_1$,
\begin{equation}\label{eq:11a-3b-positive}
 11a(\alpha)-3b(\alpha)>0.
\end{equation}
\end{lemma}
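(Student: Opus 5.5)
The plan is to differentiate each of the three combinations in $\alpha$ and show that each is strictly decreasing. Then it suffices to check the sign at the right endpoint of the relevant interval. The derivatives come from \eqref{eq:omega-derivative} together with
\[
 b'(\alpha)=-\frac1{2\pi}.
\]
The only input about $\beta$ is the monotonicity statement following \eqref{eq:beta}: $0\le\beta(\alpha)<\pi/2$ on $[\alpha_0,\pi/2)$, and $\beta\le\pi/3$ on $[\alpha_0,\alpha_1]$ by \eqref{eq:beta-alpha1}.

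For \eqref{eq:a-positive}, I compute
\[
 a'(\alpha)=\frac{3\beta-2\pi}{2\pi^2}<0,
\]
since $\beta<\pi/2$. So $a$ is strictly decreasing, and $a(\alpha)>a(\pi/2)$ for $\alpha<\pi/2$; here $\omega$ extends continuously to $\pi/2$ with the value \eqref{eq:omega-endpoint}. That endpoint value is $1/16+1/2-1/2=1/16>0$.

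For \eqref{eq:4a-b-positive}, I compute
\[
 (4a-b)'=\frac{12\beta-7\pi}{2\pi^2}<0 .
\]
At $\alpha=\pi/2$ the value is $4/16-(1/2-1/4)=0$. Strict monotonicity then gives strict positivity on $[\alpha_0,\pi/2)$. The endpoint is exactly zero, so strictness must come from the derivative being negative on the whole interval, not from a margin.

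For \eqref{eq:11a-3b-positive}, the same computation gives
\[
 (11a-3b)'=\frac{33\beta-19\pi}{2\pi^2}<0,
\]
since $\beta\le\pi/3$. So it suffices to prove $11a(\alpha_1)-3b(\alpha_1)>0$, and this requires a quantitative lower bound on $\omega(\alpha_1)$. This is the main obstacle, because $\omega(\alpha_1)$ is the normalized volume of the regular spherical tetrahedron with edge $\pi/3$, which has no elementary closed form. I would first try the crude Schläfli bound. Integrate \eqref{eq:omega-derivative} from $\alpha_1$ to $\pi/2$ and use $\beta<\pi/2$:
\[
 \omega(\alpha_1)\ge\frac1{16}-\frac{3}{4\pi}\Bigl(\frac{\pi}{2}-\alpha_1\Bigr)\approx 0.0625-0.0603>0.002 .
\]
This gives $a(\alpha_1)\gtrsim 0.0826$ and $b(\alpha_1)\approx0.2902$, hence $11a-3b\gtrsim0.909-0.871>0.03$.

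The margin is modest, so the numerical values of $\alpha_1=\arccos(1/4)$ and $\pi$ must be stated with a few certified digits. If the crude bound proved too tight, I would refine it using the monotonicity of $\beta$. On $[\alpha_1,\pi/2]$ one could split the interval and use $\beta\le\beta$(right endpoint) on each piece, computing those values from \eqref{eq:beta}.
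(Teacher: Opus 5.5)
Your proposal is correct. For \eqref{eq:a-positive}, \eqref{eq:4a-b-positive} and the monotonicity of $11a-3b$, it matches the paper exactly. The only real difference is how you show $11a(\alpha_1)-3b(\alpha_1)>0$.

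The paper uses no numerics. It observes that $1+5a-5b$ is the continuously extended normalized volume of the regular simplex, which vanishes at the Euclidean degeneration $\alpha_1$. Hence $11a(\alpha_1)-3b(\alpha_1)=v(C^4_{\alpha_1})-v(\Delta^4_{\alpha_1})=v(C^4_{\alpha_1})>0$. Equivalently, the Euclidean Gram relation for the regular Euclidean $4$-simplex gives the exact value $\omega(\alpha_1)=\alpha_1/(2\pi)-1/5\approx0.0098$.

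Your Schl\"afli integration gives the weaker bound $\omega(\alpha_1)\ge \tfrac1{16}-\tfrac{3}{4\pi}\arcsin\tfrac14\approx0.0022$, which is still sufficient. Your arithmetic checks out, giving $11a-3b\gtrsim0.038$ at $\alpha_1$, so the refinement you hold in reserve is never needed.

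The two routes trade off as follows. The paper's is exact and conceptual, but it relies on the degeneration claim ($\Vol(\Delta^4_\alpha)\to0$, or the Euclidean Gram relation) and on Gram--Euler applied to the cube at $\alpha_1$. Yours needs only the Schl\"afli formula, the endpoint value $\omega(\pi/2)=1/16$, and a few certified digits of $\arccos(1/4)$ and $\pi$.

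You could even skip the Schl\"afli bound. One has $11a(\alpha_1)-3b(\alpha_1)=11\omega(\alpha_1)+4-\tfrac{19\alpha_1}{2\pi}$, and the last two terms are already positive because $\alpha_1<8\pi/19$. Positivity of $\omega(\alpha_1)$ then suffices, though the numerical margin there is thinner ($\approx0.014$).
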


\begin{proof}
By \eqref{eq:omega-derivative},
\[
 a'(\alpha)=\frac{3\beta(\alpha)}{2\pi^2}-\frac1\pi<0
\]
for $\alpha<\pi/2$.  Equations \eqref{eq:omega-endpoint} and
\eqref{eq:a-b-def} give $a(\pi/2)=1/16$, proving
\eqref{eq:a-positive}.

Put $h=4a-b$.  Then
\[
 h'(\alpha)=\frac{6\beta(\alpha)}{\pi^2}-\frac{7}{2\pi}<0,
\]
because $\beta(\alpha)<\pi/2$ gives
$h'(\alpha)<-1/(2\pi)$.  On the other hand,
$h(\pi/2)=4/16-1/4=0$.  Hence $h(\alpha)>0$ for
$\alpha<\pi/2$.

Finally, put $d=11a-3b$.  In the simplex range,
\eqref{eq:beta-alpha1} gives $\beta(\alpha)\leq\pi/3$, and therefore
\[
 d'(\alpha)=\frac{33\beta(\alpha)}{2\pi^2}
 -\frac{19}{2\pi}<0.
\]
Indeed, $\beta(\alpha)\leq\pi/3$ gives
$d'(\alpha)\leq-4/\pi$.
At $\alpha=\alpha_1$, the simplex is Euclidean-degenerate, so the continuous
extension of \eqref{eq:volume-simplex} is zero.  Hence
\[
 d(\alpha_1)=v(C^4_{\alpha_1})-v(\Delta^4_{\alpha_1})
 =v(C^4_{\alpha_1})>0.
\]
Since $d$ is decreasing, this proves \eqref{eq:11a-3b-positive} on the whole
interval $[\alpha_0,\alpha_1]$.
\end{proof}

\section{Proof of the main theorem}\label{sec:proof}

We first compare every nonsimplex with the cube.

\begin{proposition}\label{prop:nonsimplex-cube}
Let $P\subset\HH^4$ be a finite-volume equiangular polytope with
$\alpha_0\leq\alpha<\pi/2$.  If $P$ is not a simplex, then
\[
 \Vol(P)\geq\Vol(C^4_\alpha).
\]
Equality forces $P$ to be combinatorially equivalent to the $4$-cube.  If
$\alpha>\alpha_0$, equality holds only if $P$ is isometric to
$C^4_\alpha$.
\end{proposition}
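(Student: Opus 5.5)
The plan is to compare $v(P)$ with $v(C^4_\alpha)$ directly through the linear formula \eqref{eq:volume-linear}. Write $f_0=f_0(P)$ and $f_3=f_3(P)$. Subtracting \eqref{eq:volume-cube} from \eqref{eq:volume-linear} gives
\[
 v(P)-v(C^4_\alpha)=a(\alpha)\,(f_0-16)-b(\alpha)\,(f_3-8).
\]
Since $P$ is not a simplex, Lemma~\ref{lem:flag-dichotomy} shows that $\Delta=\partial P^*$ is a flag simplicial $3$-sphere. Two consequences follow. By Corollary~\ref{cor:f-inequality}, $f_0-16\geq 4(f_3-8)$. By Lemma~\ref{lem:flag-eight}, applied to $\Delta$, whose vertices correspond to the facets of $P$, we also have $f_3\geq 8$.

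Since $a(\alpha)>0$ by \eqref{eq:a-positive}, the first inequality yields
\[
 v(P)-v(C^4_\alpha)\geq 4a(\alpha)(f_3-8)-b(\alpha)(f_3-8)=\bigl(4a(\alpha)-b(\alpha)\bigr)(f_3-8).
\]
The right side is nonnegative, because $f_3-8\geq0$ and $4a-b>0$ by \eqref{eq:4a-b-positive}. This proves $\Vol(P)\geq\Vol(C^4_\alpha)$ on the whole range $\alpha_0\leq\alpha<\pi/2$.

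For the equality case, note that both steps above must be equalities. The second step forces $(4a-b)(f_3-8)=0$, so $f_3=8$ by the strict positivity of $4a-b$. Lemma~\ref{lem:flag-eight} then identifies $\Delta$ with the boundary of the $4$-cross-polytope. Dually, $P$ is combinatorially a $4$-cube. For $\alpha>\alpha_0$, $P$ is compact by Proposition~\ref{prop:simple-angle-range}. The rigidity part of Proposition~\ref{prop:cube-existence-uniqueness} then gives that $P$ is isometric to $C^4_\alpha$.

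I expect no real obstacle, since all the substantive inputs are already in place: the flag dichotomy, Davis--Okun, and the coefficient signs. The one point needing care is the order of the estimates. The Charney--Davis bound must be multiplied by $a>0$ before the resulting expression is paired with $f_3-8\geq0$. This also explains why the equality analysis runs through $f_3$ rather than $f_0$.
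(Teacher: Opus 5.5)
Your proposal is correct and follows the paper's proof essentially step for step. You subtract the cube formula from the linear volume formula, apply the Charney--Davis bound scaled by $a(\alpha)>0$, conclude using $f_3\ge 8$ and $4a-b>0$, and settle equality via $f_3=8$, Lemma~\ref{lem:flag-eight}, and Andreev rigidity; the only difference is that you cite the flag dichotomy and compactness explicitly, where the paper leaves them implicit.
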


\begin{proof}
Subtracting \eqref{eq:volume-cube} from \eqref{eq:volume-linear} gives
\begin{align*}
 v(P)-v(C^4_\alpha)
 &=a(\alpha)\bigl(f_0(P)-16\bigr)
   -b(\alpha)\bigl(f_3(P)-8\bigr)\\
 &\geq\bigl(4a(\alpha)-b(\alpha)\bigr)
      \bigl(f_3(P)-8\bigr).
\end{align*}
Here the inequality is Corollary~\ref{cor:f-inequality}, together with
$a(\alpha)>0$.  By Lemma~\ref{lem:flag-eight}, $f_3(P)\geq8$, and by
Lemma~\ref{lem:coefficients}, $4a-b>0$.  This proves the volume inequality.

If $f_3(P)>8$, the inequality is strict.  If $f_3(P)=8$, the dual flag
$3$-sphere has eight vertices and is the boundary of the $4$-cross-polytope
by Lemma~\ref{lem:flag-eight}.  Thus $P$ is combinatorially a $4$-cube.
Proposition~\ref{prop:cube-existence-uniqueness} then shows that $P$ is
isometric to $C^4_\alpha$ whenever $\alpha>\alpha_0$.
\end{proof}

We can now complete the argument.

\begin{proof}[Proof of Theorem~\ref{thm:main}]
First suppose that $\alpha_0\leq\alpha<\alpha_1$.  If $P$ is a simplex,
Proposition~\ref{prop:simplex-existence} shows that it is isometric to
$\Delta^4_\alpha$.  If $P$ is not a simplex, Proposition
\ref{prop:nonsimplex-cube} and equations
\eqref{eq:volume-simplex}--\eqref{eq:volume-cube} give
\[
 v(P)\geq v(C^4_\alpha)
 =v(\Delta^4_\alpha)+11a(\alpha)-3b(\alpha)
 >v(\Delta^4_\alpha),
\]
where the last inequality is Lemma~\ref{lem:coefficients}.  This proves the
first part, including uniqueness.

Now suppose that $\alpha_1\leq\alpha<\pi/2$.  An equiangular hyperbolic
$4$-simplex does not exist in this range by
Proposition~\ref{prop:simplex-existence}.  Hence every $P$ is a nonsimplex,
and Proposition~\ref{prop:nonsimplex-cube} applies directly.  Its equality
statement proves uniqueness of the minimum.
\end{proof}

\begin{remark}\label{rem:stronger-form}
Proposition~\ref{prop:nonsimplex-cube} is slightly stronger than the main
theorem: throughout the entire angle range
$[\alpha_0,\pi/2)$, the regular $4$-cube is a minimum among equiangular
$4$-polytopes that are not simplices, and it is the unique minimum up to
isometry for $\alpha>\alpha_0$.  The global minimum has a genuine jump at
$\alpha_1$: as $\alpha\uparrow\alpha_1$, the volume of the regular
$4$-simplex tends to zero, but at $\alpha=\alpha_1$ that simplex no longer
exists and the minimum is the positive volume of $C^4_{\alpha_1}$.
\end{remark}

\section{A higher-dimensional question}\label{sec:higher}

The two critical angles and the two model families have direct analogues in
every dimension.  For $n\geq4$, put
\[
 \alpha_0(n)=\arccos\frac1{n-1},
 \qquad
 \alpha_1(n)=\arccos\frac1n.
\]
The Gram matrix of the regular $n$-simplex has eigenvalues $1+c$ with
multiplicity $n$ and $1-nc$ with multiplicity one.  Thus the finite-volume
regular hyperbolic $n$-simplex exists precisely for
\[
 \alpha_0(n)\leq\alpha<\alpha_1(n).
\]
Similarly, the Klein cube $[-r,r]^n$ with
$r^2=c/(1+c)$ has finite volume precisely when
$nr^2\leq1$, or $\alpha\geq\alpha_0(n)$.

This suggests the following statement.

\begin{conjecture}\label{conj:higher}
Let $n\geq4$ be even and let $P\subset\HH^n$ be a finite-volume convex
equiangular polytope with common dihedral angle $\alpha<\pi/2$.
\begin{enumerate}
\item For $\alpha_0(n)\leq\alpha<\alpha_1(n)$, the regular hyperbolic
$n$-simplex is the unique minimum-volume polytope.
\item For $\alpha_1(n)\leq\alpha<\pi/2$, the regular equiangular hyperbolic
$n$-cube is the unique minimum-volume polytope.
\end{enumerate}
\end{conjecture}

The proof above does not formally extend to $n\geq6$.  Although the
Gram--Euler relation still contains volume in even dimension, it involves
several independent sums of intermediate face angles.  Moreover, the
four-dimensional proof uses exactly the Charney--Davis inequality for flag
$3$-spheres, the first nontrivial case in which the conjectured inequality is
a theorem for all flag spheres.  In higher odd dimensions the corresponding
Charney--Davis statement is open in general.  Thus Theorem~\ref{thm:main}
may be viewed as the first case of Conjecture~\ref{conj:higher} in which the
required combinatorial sign theorem is presently available.

\section*{Statements and Declarations}

\noindent\textbf{Funding.}
The author was supported by the state task of the Sobolev Institute of
Mathematics, project No.~FWNF-2026-0011.

\noindent\textbf{Use of AI tools.}
The author used OpenAI's ChatGPT as a research and verification aid. The system contributed to the development and checking of the proof strategy and to the preparation of the exposition. The author takes full responsibility for the final manuscript.

\end{document}